\theoremstyle{definition} 
\newtheorem{Unity}{Unity}[section] 
\newtheorem*{Definition*}{Definition} 
\newtheorem{Definition}[Unity]{Definition} 
\theoremstyle{plain} 
\newtheorem*{Theorem*}{Theorem}
\newtheorem{Theorem}[Unity]{Theorem}
\newtheorem{Proposition}[Unity]{Proposition}
\newtheorem{Corollary}[Unity]{Corollary}
\newtheorem{Lemma}[Unity]{Lemma}
\theoremstyle{remark} 
\newtheorem*{Remark*}{Remark}
\newtheorem{Remark}[Unity]{Remark}
\numberwithin{Unity}{section}
\newcommand{\N}{\mathbb{N}}
\newcommand{\Z}{\mathbb{Z}}
\newcommand{\E}{\mathscr{E}}
\newcommand{\F}{\mathscr{F}}
\newcommand{\Hs}{\mathscr{H}}
\newcommand{\Ls}{\mathscr{L}}
\newcommand{\Ox}{\mathscr{O}}
\newcommand{\rk}{\mathrm{rk}}
\newcommand{\Pic}{\mathrm{Pic}}
\newcommand{\Omg}{\mathrm{\Omega}}
\newcommand{\depth}{\mathrm{depth}}
\begin{document}

\title{Strong Stability of Cotangent Bundles of Cyclic Covers}
\author{Lingguang Li}
\author{Junchao Shentu}
\address{Department of Mathematics, Tongji University, Shanghai, P. R. China}
\email{LG.Lee@amss.ac.cn}
\address{Academy of Mathematics and Systems Science, Chinese Academy of Science, Beijing, P. R. China}
\email{stjc@amss.ac.cn}
\thanks{Partially supported by the National Natural Science Foundation (No. 11271275).}
\begin{abstract}
Let $X$ be a smooth projective variety over an algebraically closed field $k$ of characteristic $p>0$ of $\dim X\geq 4$ and Picard number $\rho(X)=1$. Suppose that $X$ satisfies $H^i(X,F^{m*}_X(\Omg^j_X)\otimes\Ls^{-1})=0$ for any ample line bundle $\Ls$ on $X$, and any nonnegative integers $m,i,j$ with $0\leq i+j<\dim X$, where $F_X:X\rightarrow X$ is the absolute Frobenius morphism. Let $Y$ be a smooth variety obtained from $X$ by taking hyperplane sections of dim $\geq 3$  and
cyclic covers along smooth divisors. If the canonical bundle $\omega_Y$ is ample (resp. nef), then we prove that $\Omg_Y$ is strongly stable $($resp. strongly semistable$)$ with respect to any polarization.
\end{abstract}
\maketitle

\section{Introduction}

An important outstanding problem in differential geometry is asking whether the tangent bundles admit Hermitian-Einstein metrics. By Kobayashi-Hitchin correspondence, this problem is related to the stability of tangent bundles. In algebraic geometry over positive characteristic, there exists another useful notion of strong stability of sheaves. X. Sun \cite{Sun08}\cite{Sun10}, G. Li and F. Yu \cite{Li13} have showed that the strong stability of cotangent bundles has relation with the stability of Frobenius direct image of sheaves. So we would like to know which classes of varieties have strongly semistable cotangent bundles in positive characteristic. However, as far as I know that there are only a few classes of varieties with strongly semistable cotangent bundles that have been found. K. Joshi \cite{Joshi00} showed that the cotangent bundles of the general type hypersurfaces of $\mathbb{P}_k^n$ ($n\geq4$) are strongly stable. A. Noma \cite{Noma97} \cite{Noma01} proved that any smooth weighted complete intersection $X$ of some weak projective space with $\Pic(X)\cong\Z$ has strongly stable cotangent bundle. Later I. Biswas \cite{Biswas10} given some conditions under which the cotangent bundles of complete intersections on some Fano varieties are strongly stable.

The motivation of this paper is to find new classes of varieties with strongly (semi)stable cotangent bundles in positive characteristic. T. Peternell and J. Wi\'{s}niewski \cite{PeternellWisniewski95} have studied the stability of cotangent bundles of hypersurfaces and cyclic covers over complex field. We study the strong stability of cotangent bundles of hypersurfaces and cyclic covers in positive characteristic. The main result is:

\begin{Theorem*}
Let $k$ be an algebraically closed field of characteristic $p>0$, $X$ a $n(\geq 4)$-dimensional smooth projective variety of Picard number $\rho(X)=1$ over $k$. Suppose that $X$ satisfies $H^i(X,F^{m*}_X(\Omg^j_X)\otimes\Ls^{-1})=0$ for any ample line bundle $\Ls$ on $X$, any nonnegative integers $m,i,j$ with $0\leq i+j<n$. Let $Y$ be a smooth variety obtained from $X$ by taking hyperplane sections of dim $\geq 3$  and cyclic covers along smooth divisors. If the canonical bundle $\omega_Y$ is ample (resp. nef), then $\Omg_Y$ is strongly stable $($resp. strongly semistable$)$.
\end{Theorem*}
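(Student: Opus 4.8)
The plan is to distill from the construction a single intrinsic criterion, prove it by a vanishing argument in the spirit of Joshi, and then check that this criterion is inherited by each of the two operations producing $Y$.

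\textbf{A criterion for strong stability.} I would first establish the following statement. \emph{Let $W$ be a smooth projective variety of dimension $N\geq 3$ with $\rho(W)=1$ such that $H^i(W,F^{m*}_W(\Omg^j_W)\otimes\Ls^{-1})=0$ for every ample line bundle $\Ls$ and all nonnegative $m,i,j$ with $i+j<N$; if $\omega_W$ is ample $($resp. nef$)$, then $\Omg_W$ is strongly stable $($resp. strongly semistable$)$.} Granting this, the theorem reduces to verifying that $Y$ inherits these hypotheses, the equality $\rho(Y)=1$ making ``any polarization'' unambiguous. To prove the criterion, suppose $\Omg_W$ is not strongly stable (resp. strongly semistable): then there is an $m\geq 0$ and a saturated subsheaf $\F\subset F^{m*}_W(\Omg_W)$ of rank $r$ with $1\leq r\leq N-1$ and $\mu(\F)\geq\mu(F^{m*}_W(\Omg_W))$, the inequality being strict in the semistable case. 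Let $\Ox_W(1)$ generate $\Pic(W)$ and set $c=\deg_{\Ox_W(1)}\omega_W$, so $c\geq 1$ (resp. $c\geq 0$). Since $\F$ is saturated, $\det\F=\Ox_W(a)$, and from $\det F^{m*}_W(\Omg_W)=F^{m*}_W(\omega_W)=\Ox_W(p^mc)$ the slope inequality reads $a/r\geq p^mc/N\geq 0$, which in either case forces $a\geq 1$; hence $\Ox_W(a)$ is ample. The inclusion $\F\hookrightarrow F^{m*}_W(\Omg_W)$ is a subbundle off a closed set of codimension $\geq 2$, so (the target being locally free and $\det\F$ reflexive) it induces a nonzero map $\det\F\to\bigwedge^r F^{m*}_W(\Omg_W)=F^{m*}_W(\Omg^r_W)$, i.e.\ $H^0(W,F^{m*}_W(\Omg^r_W)\otimes\Ox_W(-a))\neq 0$ with $r<N$, contradicting the vanishing hypothesis.

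\textbf{Propagation through the two operations.} Both operations keep the dimension $\geq 3$, and I claim they preserve $\rho=1$ and the vanishing condition (ampleness/nefness of $\omega_Y$ being part of the hypothesis). For a smooth ample divisor $W'\subset W$ of dimension $\geq 3$: by Grothendieck--Lefschetz $\Pic(W')\cong\Pic(W)$, so $\rho(W')=1$ and (using $\rho(W)=1$) every ample line bundle $\Ls$ on $W'$ extends to an ample line bundle $\Ms$ on $W$. Then, twisting the conormal sequence $0\to\Ox_{W'}(-W')\to\Omg_W|_{W'}\to\Omg_{W'}\to 0$ and its exterior powers $0\to\Omg^{j-1}_{W'}(-W')\to\Omg^j_W|_{W'}\to\Omg^j_{W'}\to 0$, applying the exact functor $F^{m*}_{W'}$, using $(F^{m*}_W\Omg^j_W)|_{W'}=F^{m*}_{W'}(\Omg^j_W|_{W'})$, and combining with the restriction sequences on $W$ (tensored by $\Ox_W(-W')$), an induction on $j$ deduces the vanishing on $W'$ from the vanishing on $W$ and the ampleness of $\Ox_W(W')$. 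For a cyclic cover $\pi\colon Z\to W$ of degree $d$ prime to $p$, branched along a smooth divisor $D\in|L^{\otimes d}|$ with ramification $R$ (so that $\pi_*\Ox_Z=\bigoplus_{i=0}^{d-1}L^{-i}$ and $\Ox_Z(R)^{\otimes d}\cong\pi^*\Ox_W(D)$): one has $\rho(Z)=1$ by a Lefschetz-type theorem for such covers (e.g.\ $Z$ embeds as a smooth divisor of nef class in the $\mathbb P^1$-bundle $\mathbb P(L\oplus\Ox_W)$, contracted isomorphically onto a quasismooth weighted hypersurface, cf.\ \cite{Noma97},\cite{Noma01}); and the vanishing on $Z$ I would obtain from the Kummer identity $\Omg^j_Z(\log R)\cong\pi^*\Omg^j_W(\log D)$, the residue sequences $0\to\Omg^j_Z\to\Omg^j_Z(\log R)\to\Omg^{j-1}_R\to 0$ and $0\to\Omg^j_W\to\Omg^j_W(\log D)\to\Omg^{j-1}_D\to 0$ together with their Frobenius pullbacks, the projection formula (noting that for $\Ls$ ample on $Z$ each summand of $\pi_*(\Ls^{-1})$ is the inverse of an ample bundle on $W$, which uses $\rho(W)=1$), the vanishing on $W$, and the vanishing on the branch divisor $D$ --- available from the previous case, $D$ being a smooth ample divisor of dimension $\geq 3$. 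Iterating these two reductions from the hypothesis on $X$ yields the hypotheses for $Y$, and the criterion finishes the proof.

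\textbf{Main obstacle.} The delicate point is the cyclic-cover half of the propagation, i.e.\ pushing the Frobenius-twisted Akizuki--Nakano vanishing through a ramified cover. The trouble is that $\Omg_Z$ is not $\pi^*\Omg_W$ but differs from it by the torsion sheaf $\Omg_{Z/W}$ supported on $R$, so after applying $F^{m*}_Z$ and taking exterior powers one must organize all the resulting cohomology groups so that each one reduces --- via the projection formula and the log/residue sequences --- either to the hypothesis on $W$ or to the inductively established vanishing on $D$; one must also ensure that \emph{every} ample line bundle on $Z$, not merely those pulled back from $W$, is accounted for, which is exactly where $\rho(Z)=1$ and the relation $\Ox_Z(R)^{\otimes d}\cong\pi^*\Ox_W(D)$ are used. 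A secondary point requiring care is the dimension bookkeeping: both $\rho=1$ and the vanishing on the branch divisor need dimension $\geq 4$ at each intermediate cyclic cover, so the dimension-dropping hyperplane sections should be performed last (or interleaved with attention to this constraint).
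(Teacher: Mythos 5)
Your proposal follows essentially the same route as the paper: the same vanishing-implies-stability criterion (the paper's Proposition \ref{strongstable}, proved by the identical determinant/section argument), the same propagation through ample divisors via the conormal sequence, its exterior powers and Grothendieck--Lefschetz (Lemma \ref{CSV_for_divisor}, Corollary \ref{GroLefThm}, Proposition \ref{CSV_for_divisor_2}), and the same propagation through cyclic covers via $\Omg^j_Y(\log D')\cong\pi^*(\Omg^j_X(\log D))$, the residue sequences and the projection formula (Theorem \ref{CSV_for_cover}). The only cosmetic difference is that you obtain $\rho(Y)=1$ from a Lefschetz theorem for the cover embedded in $\mathbb{P}(L\oplus\Ox)$, whereas the paper deduces $\pi^*:\Pic(X)\to\Pic(Y)$ is an isomorphism directly from the diagram identifying $D'$ with $D$.
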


As an application, let $X$ be a $n(\geq4)$-dimensional smooth weighted complete intersection of some weak projective space. Then the cotangent bundles of smooth ample general type (resp. non-Fano) divisors and cyclic coves along smooth ample divisors of $X$ are strongly stable (resp. strongly semistable). (See Corollary \ref{Cor:WCI}).

The paper is organized as follows.
In section 2 we recall some definitions and Grothendieck-Lefschetz theorem on Picard groups in arbitrary characteristic (Lemma \ref{LefschetzThm}), which is crucial for our proofs.
In section 3 we introduce the notion of Frobenius vanish of varieties in positivie characteristic (Definition \ref{CSV}), and prove that under wide condition this property is preserved under taking hypersurfaces and cyclic covers (Proposition \ref{CSV_for_divisor_2}, Theorem \ref{CSV_for_cover}).
In section 4 we show that Frobenius vanish of varieties induces the strong (semi-)stability of cotangent bundles (Theorem \ref{strongstable}) and obtain the main result of this paper (Theorem \ref{Thm:StrongSTCover}).

\textbf{Acknowledgements:} We want to thank Professor Xiaotao Sun for his encouragement. We would also like to express our hearty thanks to Professor Baohua Fu, who helped us to translate the abstract into French and gave helpful comments to our manuscript.

\section{Preliminary}

Let $k$ be an algebraically closed field of characteristic $p>0$, $X$ a smooth projective variety of dimension $n$ over $k$ with fixed ample divisors $\Hs:=\{H_1,\cdots, H_{n-1}\}$. The \emph{absolute Frobenius morphism} $F_X:X\rightarrow X$ is induced by $\Ox_X\rightarrow\Ox_X$, $f\mapsto f^p$, with identity on the underlining topological space. Let $\E$ be a torsion free sheaf on $X$, the \emph{$\Hs$-slope} of $\E$ is defined as $$\mu_{\Hs}(\E):=\frac{c_1(\E)\cdot H_1\cdots H_{n-1}}{\rk(\E)}.$$
Then $\E$ is called \emph{$\Hs$-stable} (resp. \emph{$\Hs$-semistable}) if $\mu_{\Hs}(\F)<(\text{resp.} \leq)\mu_{\Hs}(\E)$ for any nonzero subsheaf $\F\subsetneq\E$ with $\rk(\F)<\rk(\E)$. If $F^{m*}_X(\E)$ is $\Hs$-stable (resp. $\Hs$-semistable) for any nonnegative integer $m\in\N$, then $\E$ is called \emph{strongly $\Hs$-stable} (resp. \emph{strongly $\Hs$-semistable}).

Let $\Ls$ be a line bundle on $X$, $D\in|\Ls^d|$ a smooth divisor on $X$ for some positive integer number $d>0$, $(p,d)=1$ (Unless stated otherwise, we always require $(p,d)=1$ in construction of cyclic cover). Let $\pi:Y\rightarrow X$ be the cyclic covering over $X$ branched along $D$ (Without confusion, we will omit mentioning $\Ls$ in construction), then there is a smooth divisor $D'\in|\pi^*(\Ls)|$ maps isomorphically to $D$. Let $\Omg^1_X(\log D)$ (resp. $\Omg^1_Y(\log D')$) the sheaf of one-forms on $X$ (resp. Y) with logarithmic pole $D$ (resp. $D'$). Then we have isomorphism $$\Omg^j_Y(\log D')\cong\pi^*(\Omg^j_X(\log D)),$$ which gives adjunction formula $\omega_Y\cong\pi^*(\omega_X\otimes\Ls^{d-1})$, and the following exact sequences ($1\leq j\leq n$)
$$0\longrightarrow\Omg^j_X\longrightarrow\Omg^j_X(\log D)\longrightarrow i_*\Omg^{j-1}_D\longrightarrow0,$$
$$0\longrightarrow\Omg^j_Y\longrightarrow\Omg^j_Y(\log D')\longrightarrow i'_*\Omg^{j-1}_{D'}\longrightarrow0.$$
where $i: D\rightarrow X$ and $i': D'\rightarrow Y$ are the canonical embeddings.

\begin{Lemma}\cite[Expos\'e XII Corollary 3.6]{Grothendieck68}\label{LefschetzThm}
Let $X$ be a projective scheme over a field $k$, $D\subset X$ an ample Cartier divisor. Assume $\depth~D_x\geq 3$ for any closed points $x\in D$. Moreover, if $X\setminus D$ is regular and $H^i(D,\Ox_D(-lD))=0$ for any integer $l>0$, $i=1,2$. Then the restriction map $\Pic(X)\rightarrow\Pic(D)$ is an isomorphism.
\end{Lemma}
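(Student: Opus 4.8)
The plan is to relate $\Pic(X)$ and $\Pic(D)$ through the formal completion $\widehat{X}$ of $X$ along $D$, factoring the restriction map as the composite
$$\Pic(X)\longrightarrow\Pic(\widehat{X})\longrightarrow\Pic(D)$$
and showing that each arrow is an isomorphism. Here one uses the infinitesimal neighbourhoods $X_n\subset X$ defined by $\mathscr{I}^{n+1}$, with $\mathscr{I}=\Ox_X(-D)$ the ideal sheaf of $D$; thus $X_0=D$, and a line bundle on $\widehat{X}$ amounts to a compatible system $(\mathscr{L}_n)_n$ of line bundles on the $X_n$.

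I would dispose of the right-hand arrow first, since it is elementary. Because $D$ is an effective Cartier divisor, $\mathscr{I}^n/\mathscr{I}^{n+1}\cong\Ox_D(-nD)$ is invertible on $D$, and for every $n\geq1$ the square-zero closed immersion $X_{n-1}\hookrightarrow X_n$ yields a short exact sequence of sheaves of abelian groups on $|D|$
$$0\longrightarrow\Ox_D(-nD)\longrightarrow\Ox_{X_n}^{\times}\longrightarrow\Ox_{X_{n-1}}^{\times}\longrightarrow0,$$
the left-hand map sending a local section $a$ to $1+a$; this is a group homomorphism because $(\mathscr{I}^n/\mathscr{I}^{n+1})^2=0$ in $\Ox_{X_n}$, and it needs no assumption on $\Char k$. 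The resulting long exact sequence
$$H^1(D,\Ox_D(-nD))\longrightarrow\Pic(X_n)\longrightarrow\Pic(X_{n-1})\longrightarrow H^2(D,\Ox_D(-nD))$$
shows, together with the hypothesis $H^i(D,\Ox_D(-lD))=0$ for $l>0$ and $i=1,2$, that every transition map $\Pic(X_n)\to\Pic(X_{n-1})$ is bijective. Passing to the inverse limit (where the vanishing of $H^1$ also furnishes surjectivity of $H^0(\Ox_{X_n}^{\times})\to H^0(\Ox_{X_{n-1}}^{\times})$, hence the Mittag--Leffler condition) gives $\Pic(\widehat{X})\xrightarrow{\ \sim\ }\Pic(X_0)=\Pic(D)$.

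The left-hand arrow $\Pic(X)\to\Pic(\widehat{X})$ is the substantial point, namely Grothendieck's effective Lefschetz theorem, and it is here that the remaining hypotheses are used. Since $D$ is Cartier, $\depth D_x\geq3$ is equivalent to $\depth\Ox_{X,x}\geq4$ along $D$, i.e.\ to the vanishing of the local cohomology $H^i_{\mathfrak{m}_x}(\Ox_{X,x})$ for $i\leq3$; this is the input underlying the condition $\mathrm{Lef}(X,D)$ of \cite{Grothendieck68}, guaranteeing that coherent sheaves on a neighbourhood of $D$, and their $H^0$ and $H^1$, are unchanged by formal completion along $D$ — which gives injectivity of $\Pic(X)\to\Pic(\widehat{X})$. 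For surjectivity one needs the stronger condition $\mathrm{Leff}(X,D)$: ampleness of $D$ makes $X\setminus D$ affine and supplies the Serre-type finiteness and generation arguments that make Grothendieck's existence theorem work along $D$, so a formal line bundle, twisted by $\Ox_X(mD)$ with $m\gg0$ to annihilate obstructions, algebraizes to a coherent sheaf $\mathscr{L}$ on $X$; the depth bound forces $\mathscr{L}$ to be invertible near $D$, and regularity of $X\setminus D$ forces the reflexive rank-one sheaf $\mathscr{L}$ to be invertible there as well, hence on all of $X$, and to restrict to the prescribed formal bundle.

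I expect the algebraization step to be the main obstacle: the formal-to-$D$ comparison is a straightforward diagram chase once the cohomology vanishings are granted, whereas identifying $\Pic(X)$ with $\Pic(\widehat{X})$ requires the full apparatus of formal GAGA together with careful depth and local-cohomology bookkeeping — both to see that completion loses no information and to see that formally constructed sheaves descend to genuine invertible sheaves on $X$. This is exactly the content of \cite[Expos\'e~XII, Corollary~3.6]{Grothendieck68}; having laid out the above skeleton to make clear why the stated hypotheses (ampleness and the Cartier property of $D$, $\depth D_x\geq3$, regularity of $X\setminus D$, and the vanishing of $H^1$ and $H^2$ of $\Ox_D(-lD)$) are the relevant ones, I would simply invoke that reference rather than reprove it.
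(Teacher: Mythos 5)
Your proposal is correct and ends up in the same place as the paper, which offers no proof of this lemma at all but simply cites \cite[Expos\'e XII, Corollary 3.6]{Grothendieck68}; your concluding step of invoking that reference is exactly what the authors do. The sketch you give beforehand (factoring through $\Pic(\widehat{X})$, killing the transition obstructions with the vanishing of $H^1$ and $H^2$ of $\Ox_D(-lD)$, and reducing the rest to $\mathrm{Lef}/\mathrm{Leff}$ and algebraization) is a faithful outline of Grothendieck's argument and correctly identifies the role of each hypothesis.
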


\section{Frobenius $\Hs$-vanish property}

Now we introduce the notion of Frobenius $\Hs$-vanish for projective varieties in positive characteristic.

\begin{Definition}\label{CSV}
Let $k$ be an algebraically closed field of characteristic $p>0$, and $X$ a smooth projective variety of dimension $n$ over $k$. Fix ample divisors $\Hs=\{H_1,\cdots,H_{n-1}\}$ on $X$. A line bundle $\Ls$ on $X$ is called \emph{$\Hs$-positive} $($resp. \emph{$\Hs$-nonnegative}$)$ if $$c_1(\Ls)\cdot H_1\cdots H_{n-1}>(\textit{resp.}\geq)0.$$ We call $X$ has \emph{Frobenius $\Hs$-vanish} in level $m\in\N$ up to rank $N\in\N_+$ if for any $\Hs$-positive line bundle $\Ls$ on $X$, any nonnegative integers $i,j$ with $0\leq i+j<N$, we have $$H^i(X,F_X^{m*}(\Omg^j_X)\otimes\Ls^{-1})=0.$$
\end{Definition}

\begin{Remark}\label{Rmk:WCI}
Any ample line bundle is $\Hs$-positive, $\Hs$-positive (resp. $\Hs$-nonnegative) is equivalent to ample (resp. nef) if $X$ is of Picard number $1$. Hence, by \cite[Proposition 2.1]{Noma01}, any $n(\geq 3)$-dimensional smooth weighted complete intersections of weak projective spaces have Frobenius $\Hs$-vanish up to rank $n$ in any level.
\end{Remark}

\begin{Lemma}\label{CSV_for_divisor}
Let $k$ be an algebraically closed field of characteristic $p>0$, $m\in\N$, $N\in\N_+$, $X$ a smooth projective variety over $k$ having Frobenius $\Hs$-vanish in level $m$ up to rank $N$. Let $D$ be a smooth $\Hs$-nongenative effective divisor of $X$. Then for any $\Hs$-positive line bundle $\Ls$ on $X$, any nonnegative integers $i,j$ with $0\leq i+j<N-1$, we have $H^i(D,F^{m*}(\Omg^j_D)\otimes(\Ls|_D)^{-1})=0$.
\end{Lemma}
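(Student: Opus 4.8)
The plan is to propagate the Frobenius $\Hs$-vanish from $X$ to $D$ through the conormal sequence, and then to run an induction on the exterior degree $j$. Since $D$ is a smooth divisor on the smooth variety $X$, the conormal sequence $0\to\Ox_D(-D)\to\Omg^1_X|_D\to\Omg^1_D\to0$ is locally split, so its $j$-th exterior powers give locally split short exact sequences
$$0\longrightarrow\Omg^{j-1}_D(-D)\longrightarrow\Omg^j_X|_D\longrightarrow\Omg^j_D\longrightarrow0\qquad(1\leq j\leq n-1).$$
The absolute Frobenius $F_D$ of $D$ is compatible with the inclusion $i\colon D\hookrightarrow X$ (one has $F_X\circ i=i\circ F_D$), so $(F^{m*}_X\Omg^j_X)|_D=F^{m*}_D(\Omg^j_X|_D)$ and $F^{m*}_D(\Ox_D(-D))=\Ox_D(-p^mD)$. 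Applying the exact functor $F^{m*}_D$ and twisting by $(\Ls|_D)^{-1}$ I would obtain
$$0\to F^{m*}_D(\Omg^{j-1}_D)(-p^mD)\otimes(\Ls|_D)^{-1}\to(F^{m*}_X\Omg^j_X)|_D\otimes(\Ls|_D)^{-1}\to F^{m*}_D(\Omg^j_D)\otimes(\Ls|_D)^{-1}\to0,\qquad(\star)$$
and the argument becomes a chase through the long exact sequence of $(\star)$.

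First I would establish the ``ambient, restricted'' vanishing $H^i\bigl(D,(F^{m*}_X\Omg^j_X)|_D\otimes(\Ls|_D)^{-1}\bigr)=0$ for every $\Hs$-positive $\Ls$ whenever $i+j<N-1$. This follows from the restriction sequence
$$0\to F^{m*}_X(\Omg^j_X)\otimes(\Ls\otimes\Ox_X(D))^{-1}\to F^{m*}_X(\Omg^j_X)\otimes\Ls^{-1}\to(F^{m*}_X\Omg^j_X)|_D\otimes(\Ls|_D)^{-1}\to0$$
on $X$: the middle sheaf has vanishing $H^i$ directly by hypothesis since $i+j<N$, and the first sheaf has vanishing $H^{i+1}$ because $\Ls\otimes\Ox_X(D)$ is again $\Hs$-positive (as $D$ is $\Hs$-nonnegative) and $(i+1)+j<N$.

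Then I would induct on $j$. The case $j=0$ is the previous statement with $j=0$, namely $H^i(D,(\Ls|_D)^{-1})=0$ for $i<N-1$. For the inductive step I would feed the long exact sequence of $(\star)$: its middle cohomology $H^i$ vanishes by the previous paragraph, while the outer term $H^{i+1}\bigl(D,F^{m*}_D(\Omg^{j-1}_D)(-p^mD)\otimes(\Ls|_D)^{-1}\bigr)$ should be rewritten as $H^{i+1}\bigl(D,F^{m*}_D(\Omg^{j-1}_D)\otimes((\Ox_X(p^mD)\otimes\Ls)|_D)^{-1}\bigr)$ and killed by the inductive hypothesis applied in exterior degree $j-1$, cohomological degree $i+1$, and with the $\Hs$-positive line bundle $\Ox_X(p^mD)\otimes\Ls$ (here $(i+1)+(j-1)=i+j<N-1$). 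This yields $H^i\bigl(D,F^{m*}_D(\Omg^j_D)\otimes(\Ls|_D)^{-1}\bigr)=0$ for $i+j<N-1$, which is the lemma.

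The main obstacle is bookkeeping rather than any deep input: one must check that the $p^m$-fold twist $\Ox_D(-p^mD)$ produced by the Frobenius pullback can be absorbed into an honest $\Hs$-positive line bundle on $X$ restricted to $D$, so that the induction on $j$ closes; and one must accept the loss of one unit of rank (the conclusion holds only up to rank $N-1$), which is exactly the degree shift introduced by the connecting map $H^i(D,-)\to H^{i+1}(X,-\otimes\Ox_X(-D))$ in the restriction sequence on $X$. A minor technical point is the exactness of $F^{m*}_D$ on the conormal sequence and its exterior powers, which is automatic since those sequences are locally split (equivalently, since $F_D$ is flat).
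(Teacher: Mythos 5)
Your proposal is correct and follows essentially the same route as the paper: first deduce the vanishing of $H^i$ for the restriction of $F^{m*}_X\Omg^j_X$ to $D$ from the ideal-sheaf sequence of $D$ in $X$, then induct on $j$ via the Frobenius pullback of the exterior powers of the conormal sequence, absorbing the twist $\Ox_X(-p^mD)$ into a new $\Hs$-positive line bundle. Your bookkeeping of the indices and of the $\Hs$-positivity of $\Ox_X(p^mD)\otimes\Ls$ matches the paper's argument, and your remark on local splitness just makes explicit a point the paper leaves implicit.
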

\begin{proof}
Consider the exact sequence of sheaves
$$0\longrightarrow\Omg^j_X\otimes\mathcal{O}_X(-D)\longrightarrow\Omg^j_X\longrightarrow\Omg^j_X|_D\longrightarrow0,$$
which is obtained by tensoring the exact sequence of sheaves on $X$
$$0\longrightarrow\mathcal{O}_X(-D)\longrightarrow\Ox_X\longrightarrow\Ox_D\longrightarrow0.$$
with $\Omg^j_X$ ($1\leq j\leq n$). Applying $F^{m*}_X$ to above sequence and tensoring with $\Ls^{-1}$, where $\Ls$ is a $\Hs$-positive line bundle on $X$. Then we have exact sequence
$$0\longrightarrow F^{m*}_X(\Omg^j_X)\otimes\mathcal{O}_X(-D)^{p^{nm}}\otimes\Ls^{-1}\longrightarrow F^{m*}_X(\Omg^j_X)\otimes\Ls^{-1}\longrightarrow F^{m*}_X(\Omg^j_X|_D)\otimes\Ls^{-1}\longrightarrow0,$$
and this deduces an exact sequence of cohomology groups
$$\cdots\longrightarrow H^i(X,F^{m*}_X(\Omg^j_X)\otimes\Ls^{-1})\longrightarrow H^i(X,F^{m*}_X(\Omg^j_X|_D)\otimes\Ls^{-1})$$
$$\longrightarrow H^{i+1}(X,F^{m*}_X(\Omg^j_X)\otimes(\mathcal{O}_X(D)^{p^{nm}}\otimes\Ls)^{-1})\longrightarrow\cdots.$$
Then by the Frobenius $\Hs$-vanish property of $X$, for any nonnegative integers $i,j$ with $0\leq i+j<N-1$, we have $H^i(X,F^{m*}_X(\Omg^j_X|_D)\otimes\Ls^{-1})=0$.

Consider the exterior power of exact sequence of cotangent-conormal sheaves
$$0\longrightarrow\mathcal{O}_X(-D)|_D\longrightarrow\Omg^1_X|_D\longrightarrow\Omg^1_D\longrightarrow 0,$$
we obtain exact sequence
$$0\longrightarrow\Omg^{j-1}_D\otimes\mathcal{O}_X(-D)|_D\longrightarrow\Omg^j_X|_D\longrightarrow\Omg^j_D\longrightarrow 0$$
for any integer $1\leq j\leq n$.
Applying $F^{m*}_D$ to above sequence and tensoring with line bundle $(\Ls|_D)^{-1}$, we have
$$0\rightarrow F^{m*}_D(\Omg^{j-1}_D)\otimes(\mathcal{O}_X(-D)^{p^m}\otimes\Ls^{-1})|_D\rightarrow F^{m*}_D(\Omg^j_X|_D)\otimes(\Ls|_D)^{-1}\rightarrow F^{m*}_D(\Omg^j_D)\otimes(\Ls|_D)^{-1}\rightarrow 0.$$
Then we have an exact sequence of cohomology groups
$$\cdots\longrightarrow H^i(D,F^{m*}_D(\Omg^j_X|_D)\otimes(\Ls|_D)^{-1})\longrightarrow H^i(D,F^{m*}_D(\Omg^j_D)\otimes(\Ls|_D)^{-1})$$
$$\longrightarrow H^{i+1}(D,F^{m*}_D(\Omg^{j-1}_D)\otimes(\Ox_X(-D)^{p^m}\otimes\Ls^{-1})|_D)\longrightarrow\cdots.$$
Since $\Ox_X(D)^{p^m}\otimes\Ls$ is $\Hs$-positive on $X$, then $H^i(D,(\Ox_X(-D)^{p^m}\otimes\Ls^{-1})|_D)=0$ for any integer $0\leq i< N-1$. Hence, using induction on above sequence, we have $H^i(D,F^{m*}(\Omg^j_D)\otimes(\Ls|_D)^{-1})=0$ for any $\Hs$-positive line bundle $\Ls$ on $X$ and any nonnegative integers $i,j$ with $0\leq i+j<N-1$.
\end{proof}

\begin{Corollary}\label{GroLefThm}
Let $k$ be an algebraically closed field, $X$ a $n(\geq4)$-dimensional smooth projective variety over $k$. Suppose that $X$ has Frobenius $\Hs$-vanish in level $0$ up to rank $N(\geq 3)$. Let $D$ be a smooth ample effective divisor of $X$. Then the restriction map $\Pic(X)\rightarrow\Pic(D)$ is an isomorphism.
\end{Corollary}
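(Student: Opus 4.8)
The plan is to deduce the statement from the Grothendieck--Lefschetz theorem on Picard groups recorded in Lemma~\ref{LefschetzThm}; the entire argument amounts to checking that the pair $(X,D)$ satisfies its three hypotheses.

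Two of these are purely geometric and essentially automatic. Since $X$ is smooth it is locally factorial, so the effective divisor $D$ is Cartier, and it is ample by assumption; likewise $X\setminus D$ is regular because $X$ is smooth. For the depth condition, $D$ is smooth of dimension $n-1\geq 3$ (this is the only place where $n\geq 4$ is used), so every local ring $\Ox_{D,x}$ at a closed point is regular, hence Cohen--Macaulay, whence $\depth\,\Ox_{D,x}=n-1\geq 3$.

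The remaining hypothesis, $H^i(D,\Ox_D(-lD))=0$ for all $l>0$ and $i=1,2$, is the cohomological core, and it is precisely what the Frobenius $\Hs$-vanish assumption supplies, through its degree-$0$, level-$0$ instance. Since $F^{0*}_X=\mathrm{id}$ and $\Omg^0_X=\Ox_X$, Frobenius $\Hs$-vanish in level $0$ up to rank $N$ gives $H^i(X,\Ls^{-1})=0$ for every $\Hs$-positive line bundle $\Ls$ and every $0\leq i<N$; and for $l>0$ the line bundle $\Ox_X(lD)$ is ample, hence $\Hs$-positive. Restricting to $D$ via the structure sequence $0\to\Ox_X(-(l+1)D)\to\Ox_X(-lD)\to\Ox_D(-lD)\to 0$ and passing to cohomology --- equivalently, invoking Lemma~\ref{CSV_for_divisor} with $m=0$, $j=0$ and $\Ls=\Ox_X(lD)$ --- one sees that $H^i(D,\Ox_D(-lD))$ is wedged between $H^i(X,\Ox_X(-lD))$ and $H^{i+1}(X,\Ox_X(-(l+1)D))$, both of which vanish in the admissible range of $i$ afforded by the rank $N$. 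Feeding the three verified hypotheses into Lemma~\ref{LefschetzThm} then yields at once that $\Pic(X)\to\Pic(D)$ is an isomorphism.

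The only step calling for real care is this last cohomological verification: the restriction long exact sequence converts the degree-$i$ vanishing sought on $D$ into a degree-$i$ and a degree-$(i+1)$ vanishing on $X$, so one must be sure the Frobenius $\Hs$-vanish hypothesis is assumed up to a rank large enough to reach the case $i=2$ demanded by the cited form of Grothendieck--Lefschetz; once that is granted, the rest of the argument is formal.
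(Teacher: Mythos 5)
Your argument is essentially the paper's own: the paper simply cites Lemma~\ref{CSV_for_divisor} (i.e.\ the case $m=0$, $j=0$, $\Ls=\Ox_X(lD)$ that you spell out via the restriction sequence) to get $H^i(D,\Ox_D(-lD))=0$ for $i=1,2$, and then invokes Lemma~\ref{LefschetzThm}, leaving the depth and regularity checks implicit. The caveat you flag at the end is a real one, and the paper does not address it either: with $j=0$ the conclusion of Lemma~\ref{CSV_for_divisor} only covers $i<N-1$, and your direct argument for $i=2$ needs the vanishing of $H^3(X,\Ox_X(-(l+1)D))$, which requires $N\geq 4$; so for $N=3$ neither your proof nor the paper's actually reaches the case $i=2$ demanded by Lemma~\ref{LefschetzThm}. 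In the paper's applications one always has $N=n\geq 4$, so the statement is harmless there, but as written the hypothesis should be $N\geq 4$ (or the $i=2$ vanishing needs a separate argument).
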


\begin{proof}
By lemma \ref{CSV_for_divisor}, we have $H^i(D,\Ox_D(-nD))=0$ for any integer $n>0$, $i=1,2$. Hence by lemma \ref{LefschetzThm}, the restriction map $\Pic(X)\rightarrow\Pic(D)$ is an isomorphism.
\end{proof}

\begin{Proposition}\label{CSV_for_divisor_2}
Let $k$ be an algebraically closed field of characteristic $p>0$, $m\in\N$, $X$ a $n(\geq 4)$-smooth projective variety over $k$ of Picard number $\rho(X)=1$. If $X$ has Frobenius $\Hs$-vanish in level $m$ up to rank $N(\geq3)$. Then any smooth ample effective divisors of $X$ have Frobenius $\Hs$-vanish in level $m$ up to rank $N-1$.
\end{Proposition}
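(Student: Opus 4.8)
The plan is to reduce the statement to Lemma \ref{CSV_for_divisor} with the help of the Grothendieck--Lefschetz isomorphism on Picard groups. Fix a smooth ample effective divisor $D\subset X$; since $n\geq 4$, $D$ is a smooth projective variety of dimension $n-1\geq 3$. By Lemma \ref{CSV_for_divisor} we already know that
$$H^i\bigl(D,F^{m*}_D(\Omg^j_D)\otimes(\Ls|_D)^{-1}\bigr)=0$$
for every $\Hs$-positive line bundle $\Ls$ on $X$ and all nonnegative integers $i,j$ with $i+j<N-1$. So, to prove that $D$ has Frobenius $\Hs$-vanish in level $m$ up to rank $N-1$, it remains only to see that every $\Hs$-positive line bundle on $D$ is of the form $\Ls|_D$ with $\Ls$ $\Hs$-positive on $X$. (Since $\rho(X)=1$ and, as will follow, $\rho(D)=1$, the phrase ``$\Hs$-positive'' means ``ample'' on both $X$ and $D$ and is independent of the chosen polarizations, so no ambiguity arises from the polarization on $D$.)

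The first step is to show that the restriction map $r\colon\Pic(X)\to\Pic(D)$ is an isomorphism. I would deduce this from Lemma \ref{LefschetzThm}: the depth condition $\depth~D_x\geq 3$ holds because $D$ is smooth of dimension $n-1\geq 3$, the complement $X\setminus D$ is regular because $X$ is smooth, and the vanishing $H^i(D,\Ox_D(-lD))=0$ for $l>0$ and $i=1,2$ is the $j=0$ instance of Lemma \ref{CSV_for_divisor} applied with $\Ls=\Ox_X(lD)$ (this instance of the Frobenius $\Hs$-vanish property is insensitive to the level, hence available from our hypothesis; equivalently, this is the content of Corollary \ref{GroLefThm}). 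Thus $r$ is an isomorphism; in particular $\rho(D)=1$, so on $D$ the notions of $\Hs$-positivity and ampleness agree.

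For the second step, let $\Ms$ be any $\Hs$-positive --- that is, ample --- line bundle on $D$ and put $\Ls:=r^{-1}(\Ms)\in\Pic(X)$, so that $\Ls|_D\cong\Ms$. I claim $\Ls$ is $\Hs$-positive on $X$: picking a curve $C\subset D$ gives $c_1(\Ls)\cdot C=c_1(\Ms)\cdot C>0$, and because $\rho(X)=1$ a line bundle of positive degree on a curve lies in the ample cone, hence is $\Hs$-positive. Feeding this $\Ls$ into Lemma \ref{CSV_for_divisor} yields $H^i(D,F^{m*}_D(\Omg^j_D)\otimes\Ms^{-1})=0$ for all nonnegative $i,j$ with $i+j<N-1$, and since $\Ms$ was arbitrary this is exactly the asserted Frobenius $\Hs$-vanish of $D$ in level $m$ up to rank $N-1$.

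The only step carrying genuine content is the appeal to Grothendieck--Lefschetz: Lemma \ref{CSV_for_divisor} by itself controls only the line bundles on $D$ pulled back from $X$, and it is precisely the hypotheses $\rho(X)=1$, $n\geq 4$ and $N\geq 3$ that let us identify $\Pic(D)$ with $\Pic(X)$ and transport ampleness across the identification; once this is done, the conclusion is a direct quotation of Lemma \ref{CSV_for_divisor}. I do not anticipate any other obstacle.
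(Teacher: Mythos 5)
Your proof is correct and follows essentially the same route as the paper's: identify $\Pic(D)$ with $\Pic(X)$ via the Grothendieck--Lefschetz isomorphism of Corollary \ref{GroLefThm}, transport ampleness across the identification using $\rho(X)=\rho(D)=1$, and then quote Lemma \ref{CSV_for_divisor}. Your observation that the $j=0$ vanishing needed for the Picard-group isomorphism is insensitive to the level even patches a small mismatch the paper glosses over (Corollary \ref{GroLefThm} is stated for level $0$ while the hypothesis here is level $m$).
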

\begin{proof}
Since $\rho(X)=1$. All $\Hs$-positive line bundles are ample, and all ample line bundles are numerical equivalent up to a positive scalar.
By, corollary \ref{GroLefThm}, $D$ is also of Picard number $\rho(D)=1$. Therefore, any line bundle on $D$ is of the form $\Ls|_D$, where $\Ls$ is a line bundle on $X$. But $\Ls$ is ample on $X$ if and only if $\Ls|_D$ is ample on $D$. Hence this proposition follows from lemma \ref{CSV_for_divisor}.
\end{proof}

\begin{Theorem}\label{CSV_for_cover}
Let $k$ be an algebraically closed field of characteristic $p>0$, $m\in\N$, $N\in\N_+$, and $X$ a $n(\geq4)$-dimensional smooth projective variety over $k$, $D$ a smooth ample divisor on $X$, and $\pi:Y\rightarrow X$ a cyclic cover of $X$ branched along $D$. Suppose that $X$ has Frobenius $\Hs$-vanish in level $m$ up to rank $N(\geq3)$. Then $Y$ also has Frobenius $\Hs$-vanish in level $m$ up to rank $N$.
\end{Theorem}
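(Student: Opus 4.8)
The plan is to fix on $Y$ the polarization $\Hs_Y=\{\pi^*H_1,\dots,\pi^*H_{n-1}\}$ (ample, since $\pi$ is finite) and to push the vanishing down from $Y$ to $X$ and to $D$ through the two finite morphisms in play: the closed immersion $i':D'\hookrightarrow Y$ and the cover $\pi:Y\to X$. First I would reduce to pull-back line bundles. Applying Lemma~\ref{CSV_for_divisor} to $X\supset D$ in the case $j=0$ and transporting along the isomorphism $\pi|_{D'}:D'\xrightarrow{\ \sim\ }D$, under which $\Ox_Y(D')|_{D'}$ corresponds to $\Ls|_D$, gives $H^i(D',\Ox_{D'}(-lD'))=0$ for all $l>0$ and $i=1,2$. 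Since $D'\in|\pi^*\Ls|$ is an ample Cartier divisor on the smooth variety $Y$, so that $\depth D'_x=n-1\geq3$, Lemma~\ref{LefschetzThm} yields $\Pic(Y)\xrightarrow{\ \sim\ }\Pic(D')$. Combined with $\Pic(D')\cong\Pic(D)\xrightarrow{\ \sim\ }\Pic(X)$ (Corollary~\ref{GroLefThm}) and the fact that the composite $\Pic(X)\xrightarrow{\pi^*}\Pic(Y)\to\Pic(D')$ is the restriction isomorphism $\Pic(X)\to\Pic(D)$, this forces $\pi^*:\Pic(X)\to\Pic(Y)$ to be an isomorphism; and $\pi^*\Ms$ is $\Hs_Y$-positive iff $\Ms$ is $\Hs$-positive because $c_1(\pi^*\Ms)\cdot\pi^*H_1\cdots\pi^*H_{n-1}=d\,\bigl(c_1(\Ms)\cdot H_1\cdots H_{n-1}\bigr)$. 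Hence it suffices to show $H^i(Y,F_Y^{m*}\Omg^j_Y\otimes\pi^*\Ms^{-1})=0$ for every $\Hs$-positive line bundle $\Ms$ on $X$ and all $0\leq i+j<N$.

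For that I would run the logarithmic cotangent sequence on $Y$. Applying the exact functor $F_Y^{m*}(-)\otimes\pi^*\Ms^{-1}$ (exact because $F_Y$ is flat, $Y$ being smooth) to $0\to\Omg^j_Y\to\Omg^j_Y(\log D')\to i'_*\Omg^{j-1}_{D'}\to0$ reduces the claim to the two statements (I) $H^i(Y,F_Y^{m*}\Omg^j_Y(\log D')\otimes\pi^*\Ms^{-1})=0$ and (II) $H^{i-1}(Y,F_Y^{m*}(i'_*\Omg^{j-1}_{D'})\otimes\pi^*\Ms^{-1})=0$. For (I) I would combine $\Omg^j_Y(\log D')\cong\pi^*\Omg^j_X(\log D)$, the identity $F_Y^{m*}\pi^*=\pi^*F_X^{m*}$, the finiteness of $\pi$, and the splitting $\pi_*\Ox_Y=\bigoplus_{l=0}^{d-1}\Ls^{-l}$ to get
$$H^i\bigl(Y,F_Y^{m*}\Omg^j_Y(\log D')\otimes\pi^*\Ms^{-1}\bigr)=\bigoplus_{l=0}^{d-1}H^i\bigl(X,F_X^{m*}\Omg^j_X(\log D)\otimes(\Ms\otimes\Ls^l)^{-1}\bigr).$$
Each $\Ms\otimes\Ls^l$ is $\Hs$-positive (note $\Ls$ is ample, as $\Ls^d\cong\Ox_X(D)$ is), so feeding the logarithmic sequence on $X$ into $F_X^{m*}(-)\otimes(\Ms\otimes\Ls^l)^{-1}$ annihilates the $\Omg^j_X$-term by the Frobenius $\Hs$-vanish of $X$ (since $i+j<N$), leaving only a torsion term $F_X^{m*}(i_*\Omg^{j-1}_D)$, dealt with next.

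The torsion terms in (I) and (II) I would treat uniformly. For a smooth divisor $i:D\hookrightarrow X$ and a locally free sheaf $\mathcal G$ on $D$, the sheaf $F_X^{m*}(i_*\mathcal G)$ is supported on the thickening $p^mD$ and carries a finite filtration with graded pieces $i_*\bigl(F_D^{m*}\mathcal G\otimes(\Ox_X(-D)|_D)^{\otimes l}\bigr)$, $0\leq l\leq p^m-1$. Taking $\mathcal G=\Omg^{j-1}_D$ — and, for (II), first transporting through $D'\cong D$ using $\Ox_Y(-D')|_{D'}\cong\Ls^{-1}|_D$ — each graded piece contributes, after restriction to $D$, a group $H^{i'}(D,F_D^{m*}\Omg^{j-1}_D\otimes(\mathcal A|_D)^{-1})$ with $\mathcal A$ an $\Hs$-positive line bundle on $X$ and $i'\in\{i-1,i\}$; by Lemma~\ref{CSV_for_divisor} this vanishes, since $i'+(j-1)\leq i+j-1<N-1$. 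This closes (I) and (II), hence the theorem.

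The hard part, I expect, is the passage of the absolute Frobenius past the two push-forwards. Commuting $F^{m*}$ past $i'_*$ is not $i'_*F_{D'}^{m*}$: one is forced onto the infinitesimal neighbourhood $p^mD'$, and everything rests on correctly identifying the conormal twists in its filtration. Commuting $F^{m*}$ past $\pi_*$, via $F_Y^{m*}\pi^*=\pi^*F_X^{m*}$ and the eigensheaf decomposition of $\pi_*\Ox_Y$, is what reduces the problem on $Y$ to the assumed Frobenius $\Hs$-vanish on $X$ and its divisor consequence (Lemma~\ref{CSV_for_divisor}) on $D$. Once those two reductions are set up, the rest is bookkeeping of cohomological degrees, and it closes up precisely because $N\geq3$ (while $\dim X\geq4$ enters only in order to run Grothendieck--Lefschetz on $D'\subset Y$).
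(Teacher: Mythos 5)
Your proposal is correct and follows the same overall strategy as the paper: establish $\Pic(X)\xrightarrow{\ \sim\ }\Pic(Y)$ via Grothendieck--Lefschetz applied to $D\subset X$ and $D'\subset Y$ so that every line bundle on $Y$ is a pull-back, then run the two logarithmic cotangent sequences together with $\Omg^j_Y(\log D')\cong\pi^*\Omg^j_X(\log D)$, the projection formula along the affine map $\pi$, and the eigensheaf decomposition $\pi_*\Ox_Y\cong\bigoplus_{l=0}^{d-1}\Ls^{-l}$, finally killing everything with Lemma \ref{CSV_for_divisor}. (That you reduce on $Y$ first and push down to $X$ afterwards, whereas the paper proves the vanishing for $F_X^{m*}\Omg^j_X(\log D)$ on $X$ first and then transports it to $Y$, is only a reordering.) The one substantive difference is your treatment of the torsion terms, and there your version is the more careful one: the paper asserts $F_X^{m*}i_*\cong i_*F_D^{m*}$ and the analogous identity for $i'$, which is not correct --- the square formed by $i$ and the absolute Frobenii commutes but is not Cartesian, since $D\times_{X,F_X^m}X$ is the thickening defined by $\Ox_X(-p^mD)$ --- whereas you replace it by the filtration of $F_X^{m*}(i_*\mathcal{G})$ with graded pieces $i_*\bigl(F_D^{m*}\mathcal{G}\otimes(\Ox_X(-D)|_D)^{\otimes l}\bigr)$, $0\leq l\leq p^m-1$. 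As you observe, this repair costs nothing: the extra conormal twists only replace $\Ms$ by the still $\Hs$-positive bundles $\Ms\otimes\Ox_X(D)^{\otimes l}$ (resp.\ $\Ms\otimes\Ls^{l}$ on the $Y$-side), so Lemma \ref{CSV_for_divisor} applies in the same range $i'+(j-1)<N-1$, and the theorem's conclusion is unaffected. One small caveat you inherit from the paper rather than introduce: deducing $H^i(D,\Ox_D(-lD))=0$ for $i=1,2$ from Lemma \ref{CSV_for_divisor} requires $i<N-1$, which for $N=3$ only covers $i\leq 1$; this off-by-one is already present in Corollary \ref{GroLefThm} and is harmless in the paper's applications, where $N=n\geq 4$.
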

\begin{proof}
By construction of cyclic cover, there exists smooth divisor $i':D'\rightarrow Y$ maps isomorphically to $D$. This implies $H^i(D,\Ox_D(-lD))\cong H^i(D',\Ox_{D'}(-lD'))$. Moreover, by corollary \ref{GroLefThm}, the commutative diagram
$$\xymatrix{
D' \ar[r]^{i'}\ar[d]^{\cong} & Y \ar[d]^{\pi}\\
D \ar[r]^i & X,}$$
deduces the following commutative diagram
$$\xymatrix{
\Pic(D') & \Pic(Y) \ar[l]_{\cong}\\
\Pic(D) \ar[u]_{\cong} & \Pic(X). \ar[l]_{\cong}\ar[u]_{\pi^*}}$$
This implies the homomorphism $\pi^*:\Pic(X)\rightarrow\Pic(Y)$ is an isomorphism. Let $\Ls\in\Pic(X)$, then $\Ls$ is $\Hs$-positive (resp. $\Hs$-nonnegative) on $X$ if and only if $\pi^*\Ls$ is $\pi^*\Hs$-positive (resp. nonnegative) on $Y$, where $\pi^*(\Hs):=\{\pi^*(H_1),\cdots,\pi^*(H_{n-1})\}$.

Applying $F^{m*}_{X}$ to the following sequence and tensoring with $\Ls^{-1}$,
$$0\longrightarrow\Omg^j_X\longrightarrow\Omg^j_X(\log D)\longrightarrow i_*\Omg^{j-1}_D\longrightarrow0,$$
where $\Ls$ is a $\Hs$-positive line bundle on $X$. We have exact sequence
$$0\longrightarrow F^{m*}_X(\Omg^j_X)\otimes\Ls^{-1}\longrightarrow F^{m*}_X(\Omg^j_X(\log D))\otimes\Ls^{-1}\longrightarrow F^{m*}_X(i_*\Omg^{j-1}_D)\otimes\Ls^{-1}\longrightarrow0.$$
This deduces exact sequence of cohomology groups
$$\cdots\longrightarrow H^i(X,F^{m*}_X(\Omg^j_X)\otimes{\Ls}^{-1})\longrightarrow H^i(X,F^{m*}_X(\Omg^j_X(\log D))\otimes\Ls^{-1})$$
$$\longrightarrow H^i(X,F^{m*}_X(i_*\Omg^{j-1}_D)\otimes\Ls^{-1})\longrightarrow\cdots.$$
Notice that $F_X^{m*}i_*\cong i_* F_D^{m*}$, there exists natural isomorphism
$$F^{m*}_X(i_*\Omg^{j-1}_D)\otimes\Ls^{-1}\cong i_*(F^{m*}_{D}(\Omg^{j-1}_D)\otimes(\Ls|_D)^{-1}).$$
Then, by Lemma \ref{CSV_for_divisor}, for any nonnegative integers $i,j$ with $0\leq i+j<N$, we have $H^i(X,F^{m*}_X(\Omg^{j-1}_D)\otimes\Ls^{-1})=0$. Hence $H^i(X,F^{m*}_X(\Omg^j_X(\log D))\otimes\Ls^{-1})=0$.

From the isomorphism $\Omg^j_Y(\log D')\cong\pi^*(\Omg^j_X(\log D))$, we have isomorphisms $$F^{m*}_Y(\Omg^1_Y(\log D'))\cong F^{m*}_Y(\pi^*(\Omg^j_X(\log D)))\cong\pi^*(F^{m*}_X(\Omg^j_X(\log D))).$$
As $\pi_*(\Ox_Y)\cong\bigoplus_{0\leq i<d}\Ox(D)^{-i}$, so by projective formula, we get isomorphism $$\pi_*(F^{m*}_Y(\Omg^j_Y(\log D')))\cong F^{m*}_X(\Omg^j_X(\log D))\otimes\bigoplus_{0\leq i<d}\Ox(D)^{-i}.$$

Since $\pi:Y\rightarrow X$ is an affine morphism, by projective formula, we have
\begin{eqnarray*}
H^i(Y,F^{m*}_Y(\Omg^j_Y(\log D'))\otimes\pi^*(\Ls^{-1}))\cong H^i(Y,F^{m*}_X(\Omg^j_X(\log D))\otimes(\bigoplus_{0\leq i<d}\Ox(D)^{-i})\otimes\Ls^{-1})=0.
\end{eqnarray*}

Consider the following exact sequence on $Y$ ($1\leq j\leq n$)
$$0\longrightarrow\Omg^j_Y\longrightarrow\Omg^j_Y(\log D')\longrightarrow i'_* \Omg^{j-1}_{D'}\longrightarrow0.$$
Applying $F^{m*}_{Y}$ to above sequence and tensoring with $\pi^*(\Ls^{-1})$, we get exact sequece
$$0\longrightarrow F^{m*}_Y(\Omg^j_Y)\otimes\pi^*(\Ls^{-1})\longrightarrow F^{m*}_Y(\Omg^j_Y(\log D'))\otimes\pi^*(\Ls^{-1})\longrightarrow F^{m*}_Y(i'_*\Omg^{j-1}_{D'})\otimes\pi^*(\Ls^{-1})\longrightarrow0,$$
Notice that $F_Y^{m*}i'_*\cong i'_*F_{D'}^{m*}$, there exists natural isomorphism
$$F^{m*}_Y(i'_*\Omg^{j-1}_{D'})\otimes\pi^*(\Ls^{-1})\cong i_*(F^{m*}_{D'}(\Omg^{j-1}_{D'})\otimes\pi^*(\Ls^{-1})|_{D'}).$$
Taking cohomology we have
$$\cdots\longrightarrow H^{i-1}(D',F^{m*}_{D'}(\Omg^{j-1}_{D'})\otimes\pi^*(\Ls^{-1})|_{D'})\longrightarrow H^i(Y,F^{m*}_Y(\Omg^j_Y)\otimes\pi^*(\Ls^{-1}))$$
$$\longrightarrow H^i(Y,F^{m*}_Y(\Omg^j_Y(\log D'))\otimes\pi^*(\Ls^{-1}))\longrightarrow\cdots.$$
Since $D'$ maps isomorphically to $D$ with commutative diagram
$$\xymatrix{
D' \ar[r]^{i'}\ar[d]^{\cong} & Y \ar[d]^{\pi}\\
D \ar[r]^i & X.}$$
Therefore, there is isomorphism $$H^{i-1}(D',F^{m*}_{D'}(\Omg^{j-1}_{D'})\otimes\pi^*(\Ls^{-1})|_{D'})\cong H^{i-1}(D,F^{m*}_D(\Omg^{j-1}_D)\otimes(\Ls|_D)^{-1}).$$
Then, by lemma \ref{CSV_for_divisor}, for any nonnegative integers $i,j$ with $0\leq i+j<N$, we have $H^i(Y,F^{m*}_Y(\Omg^j_Y)\otimes\pi^*(\Ls^{-1}))=0$. This completes the proof of this theorem.
\end{proof}

\section{Strong Stability of Cotangent Bundles}

Frobenius $\Hs$-vanish property has closed relation with strong stability of cotangent bundles in positive characteristic, at least for smooth projective varieties of Kodaira dimension $\geq 0$.

\begin{Proposition}\label{strongstable}
Let $k$ be an algebraically closed field of characteristic $p>0$, $m\in\N$, and $X$ a smooth projective variety over $k$. Suppose that $X$ has Frobenius $\Hs$-vanish in level $m$ up to rank $\dim X$ and $\omega_X$ is $\Hs$-positive $($resp. $\Hs$-nonnegative$)$. Then $F_X^{m*}(\Omg_X)$ is $\Hs$-stable $($resp. $\Hs$-semistable$)$.
\end{Proposition}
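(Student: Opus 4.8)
The plan is a proof by contradiction: from a hypothetical destabilizing subsheaf I would extract a global section forbidden by the Frobenius $\Hs$-vanish hypothesis. Write $n=\dim X$ and $E:=F_X^{m*}(\Omg_X)$, a locally free sheaf of rank $n$. The first step is to pin down $\mu_\Hs(E)$. Since the absolute Frobenius acts on $\Pic(X)$ by $\Ls\mapsto\Ls^{\otimes p}$, one has $c_1(E)=p^m\,c_1(\Omg_X)=p^m\,c_1(\omega_X)$, hence $\mu_\Hs(E)=\frac{p^m\, c_1(\omega_X)\cdot H_1\cdots H_{n-1}}{n}$, which is $>0$ when $\omega_X$ is $\Hs$-positive and $\ge 0$ when $\omega_X$ is $\Hs$-nonnegative.

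Now suppose $E$ is not $\Hs$-stable (resp. not $\Hs$-semistable). Then there is a nonzero subsheaf $\F\subsetneq E$ with $r:=\rk(\F)$ satisfying $1\le r\le n-1$ and $\mu_\Hs(\F)\ge\mu_\Hs(E)$ (resp. $\mu_\Hs(\F)>\mu_\Hs(E)$). Put $\det\F:=(\wedge^r\F)^{**}$; as $X$ is smooth this is a line bundle, and $c_1(\det\F)=c_1(\F)$, so $c_1(\det\F)\cdot H_1\cdots H_{n-1}=r\,\mu_\Hs(\F)$. In the $\Hs$-stable case this is $\ge r\,\mu_\Hs(E)>0$; in the $\Hs$-semistable case $\mu_\Hs(\F)>\mu_\Hs(E)\ge 0$ yields the same conclusion. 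Thus in both situations $\det\F$ is an $\Hs$-positive line bundle; this is exactly the point where the (semi)positivity of $\omega_X$ enters, and why the semistable case requires the strict inequality on slopes.

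Next I would produce the forbidden section. Taking $r$-th exterior powers of $\F\hookrightarrow E$ gives a map $\wedge^r\F\to\wedge^r E=F_X^{m*}(\Omg^r_X)$ which is nonzero, being injective at the generic point. Since $\wedge^r E$ is locally free, hence reflexive, one has $\mathrm{Hom}(\wedge^r\F,\wedge^r E)=\mathrm{Hom}((\wedge^r\F)^{**},\wedge^r E)=\mathrm{Hom}(\det\F,\wedge^r E)$, so this map factors through a nonzero map $\det\F\to F_X^{m*}(\Omg^r_X)$; being a nonzero map from a line bundle to a torsion free sheaf, it is injective. Hence $H^0\bigl(X,F_X^{m*}(\Omg^r_X)\otimes(\det\F)^{-1}\bigr)\ne 0$. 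But $\det\F$ is $\Hs$-positive and $0\le 0+r<n$, so the Frobenius $\Hs$-vanish property of $X$ in level $m$ up to rank $n$, applied with $i=0$ and $j=r$, gives $H^0\bigl(X,F_X^{m*}(\Omg^r_X)\otimes(\det\F)^{-1}\bigr)=0$ — a contradiction. Therefore $F_X^{m*}(\Omg_X)$ is $\Hs$-stable (resp. $\Hs$-semistable).

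I expect the only genuinely delicate point to be the passage from the destabilizing subsheaf to an injection of line bundles $\det\F\hookrightarrow\wedge^r E$, i.e. extending $\wedge^r\F\to\wedge^r E$ across the reflexive hull and checking the extension stays injective; this is routine given that $\wedge^r E$ is reflexive. Together with the slope bookkeeping that forces $\det\F$ onto the strictly $\Hs$-positive side, this is the whole content; everything else is the standard mechanism of restricting to exterior powers and quoting the vanishing hypothesis.
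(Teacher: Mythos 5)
Your proposal is correct and follows essentially the same route as the paper: assume a destabilizing subsheaf exists, pass to its determinant to get a nonzero map $\det\F\to F_X^{m*}(\Omg^r_X)$ with $\det\F$ an $\Hs$-positive line bundle, and contradict the Frobenius $\Hs$-vanish hypothesis with $i=0$, $j=r$. You merely spell out the slope bookkeeping and the reflexive-hull extension that the paper leaves implicit.
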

\begin{proof}
This is a classical argument. Assume $F_X^{m*}(\Omg_X)$ is not $\Hs$-stable (resp. $\Hs$-semistable), then there is a reflexive subsheaf $\E\subseteq F_X^{m*}(\Omg_X)$ of rank $j(<\dim X)$ such that $\mu_{\Hs}(\E)\geq(\text{resp. }>)\mu_{\Hs}(F_X^{m*}(\Omg_X))$. This induces a nontrivial homomorphism $\det(\E)\rightarrow F_X^{m*}(\Omg^j_X)$. Since $\omega_X$ is $\Hs$-positive $($resp. $\Hs$-nonnegative$)$, we have $\det(\E)$ is $\Hs$-positive. This contradicts the Frobenius $\Hs$-vanish assumption on $X$. Hence, $F_X^{m*}(\Omg_X)$ is $\Hs$-stable $($resp. $\Hs$-semistable$)$.
\end{proof}

\begin{Remark}
If $\kappa(X)>0$ (resp. $\geq 0$), then $\omega_X$ is $\Hs$-positive (resp. $\Hs$-nonnegative).
\end{Remark}

\begin{Corollary}
Let $k$ be an algebraically closed field of characteristic $p>0$, $m\in\N$, and $X$ a $n(\geq4)$-dimensional smooth projective variety over $k$, $\pi:Y\rightarrow X$ a cyclic cover of $X$ branched along a smooth ample divisor $D$. Suppose that $X$ has Frobenius $\Hs$-vanish in level $m$ up to rank $n$, and $\omega_Y$ is $\pi^*(\Hs)$-positive $($resp. $\pi^*(\Hs)$-nonnegative$)$. Then $F_X^{m*}(\Omg_Y)$ is $\pi^*(\Hs)$-stable $($resp. $\pi^*(\Hs)$-semistable$)$.
\end{Corollary}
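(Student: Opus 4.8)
The plan is to deduce this Corollary by combining Theorem \ref{CSV_for_cover} with Proposition \ref{strongstable}, applied to $Y$ in place of $X$. First I would observe that since $X$ has Frobenius $\Hs$-vanish in level $m$ up to rank $n=\dim X=\dim Y$, and $D\subset X$ is a smooth ample divisor with $\pi:Y\to X$ the cyclic cover branched along $D$, Theorem \ref{CSV_for_cover} applies directly and tells us that $Y$ also has Frobenius $\Hs$-vanish in level $m$ up to rank $n$. One small point to spell out: the polarization with respect to which $Y$ inherits the property is $\pi^*(\Hs)=\{\pi^*H_1,\dots,\pi^*H_{n-1}\}$, and the proof of Theorem \ref{CSV_for_cover} already establishes (via the Grothendieck--Lefschetz isomorphism $\pi^*:\Pic(X)\xrightarrow{\cong}\Pic(Y)$ and the ampleness equivalence it yields) that $\pi^*(\Hs)$-positive line bundles on $Y$ are exactly the pullbacks of $\Hs$-positive (hence ample) line bundles on $X$, so the vanishing $H^i(Y,F_Y^{m*}(\Omg_Y^j)\otimes\mathcal{N}^{-1})=0$ holds for every $\pi^*(\Hs)$-positive $\mathcal{N}$ on $Y$ and all $i+j<n$, which is precisely the Frobenius $\pi^*(\Hs)$-vanish condition for $Y$.

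Next I would invoke Proposition \ref{strongstable} with $Y$ as the variety and $\pi^*(\Hs)$ as the polarization. The hypotheses required there are: $Y$ has Frobenius $\pi^*(\Hs)$-vanish in level $m$ up to rank $\dim Y=n$, which we have just verified, and $\omega_Y$ is $\pi^*(\Hs)$-positive (resp. $\pi^*(\Hs)$-nonnegative), which is exactly the hypothesis of the Corollary. Proposition \ref{strongstable} then yields immediately that $F_Y^{m*}(\Omg_Y)$ is $\pi^*(\Hs)$-stable (resp. $\pi^*(\Hs)$-semistable). Note the statement in the Corollary writes $F_X^{m*}(\Omg_Y)$, but this must be read as $F_Y^{m*}(\Omg_Y)$ since the Frobenius acting here is the one on $Y$ (the absolute Frobenius of the ambient object whose cotangent sheaf we are looking at); this is the conclusion Proposition \ref{strongstable} delivers.

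The argument is essentially a two-line concatenation of the two previously established results, so there is no genuine obstacle; the only thing to be careful about is bookkeeping on polarizations — making sure that the polarization $\pi^*(\Hs)$ appearing in the conclusion of Theorem \ref{CSV_for_cover} is the same one fed into Proposition \ref{strongstable}, and that $\omega_Y$ being $\pi^*(\Hs)$-positive is the right positivity notion matching Definition \ref{CSV} for $Y$. Once these are aligned, the proof reads: by Theorem \ref{CSV_for_cover}, $Y$ has Frobenius $\pi^*(\Hs)$-vanish in level $m$ up to rank $n=\dim Y$; since $\omega_Y$ is $\pi^*(\Hs)$-positive (resp. $\pi^*(\Hs)$-nonnegative), Proposition \ref{strongstable} gives that $F_Y^{m*}(\Omg_Y)$ is $\pi^*(\Hs)$-stable (resp. $\pi^*(\Hs)$-semistable), as claimed.
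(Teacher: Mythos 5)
Your proposal is correct and matches the paper's own proof, which simply cites Theorem \ref{CSV_for_cover} together with Proposition \ref{strongstable}; your added bookkeeping on the polarization $\pi^*(\Hs)$ and the reading of $F_X^{m*}(\Omg_Y)$ as $F_Y^{m*}(\Omg_Y)$ only makes explicit what the paper leaves implicit.
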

\begin{proof}
It is obviously by theorem \ref{CSV_for_cover} and proposition \ref{strongstable}.
\end{proof}

Combine all results above we obtain our main result of the paper.

\begin{Theorem}\label{Thm:StrongSTCover}
Let $k$ be an algebraically closed field of characteristic $p>0$, $X$ a $n(\geq 4)$-dimensional smooth projective variety of Picard number $\rho(X)=1$ over $k$. Suppose that $X$ has Frobenius $\Hs$-vanish of rank $n$ in any level. Let $Y$ be a smooth variety obtained from $X$ by taking hyperplane sections of dim $\geq 3$  and cyclic covers along smooth divisors. If the canonical bundle $\omega_Y$ is ample (resp. nef), then $\Omg_Y$ is strongly stable $($resp. strongly semistable$)$ with respect to any polarization.
\end{Theorem}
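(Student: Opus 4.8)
The plan is to propagate the Frobenius $\Hs$-vanish property of \S3 along the tower of operations defining $Y$, and then feed the outcome into Proposition \ref{strongstable}. Unwinding the hypothesis, there is a finite chain of smooth projective varieties
$$X=X_0,\quad X_1,\quad\ldots,\quad X_\ell=Y,$$
where for each $i$ either (a) $X_{i+1}\subset X_i$ is a smooth hyperplane section (hence a smooth ample effective divisor) of dimension $\geq 3$, so that $\dim X_i\geq 4$ and $\dim X_{i+1}=\dim X_i-1$; or (b) $X_{i+1}\to X_i$ is a cyclic cover branched along a smooth ample divisor, so that $\dim X_{i+1}=\dim X_i$ (and such a cover is taken from a variety of dimension $\geq 4$, as in Theorem \ref{CSV_for_cover}).

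First I would establish, by induction on $i$, that each $X_i$ is a smooth projective variety with Picard number $\rho(X_i)=1$ and with Frobenius $\Hs$-vanish up to rank $\dim X_i$ in every level $m\in\N$. The case $i=0$ is the hypothesis, together with the observation that, for Picard number $1$, $\Hs$-positivity means ampleness and does not depend on the polarization (Remark \ref{Rmk:WCI}). For the inductive step in case (a): since $\dim X_i\geq 4$, $\rho(X_i)=1$ and $X_i$ has Frobenius $\Hs$-vanish up to rank $\dim X_i\geq 3$, Corollary \ref{GroLefThm} gives $\rho(X_{i+1})=1$ and Proposition \ref{CSV_for_divisor_2} gives Frobenius $\Hs$-vanish for $X_{i+1}$ up to rank $\dim X_i-1=\dim X_{i+1}$ in every level; so the equality ``rank $=$ dimension'' is preserved. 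For case (b): Theorem \ref{CSV_for_cover} gives Frobenius $\Hs$-vanish for $X_{i+1}$ up to rank $\dim X_i=\dim X_{i+1}$ in every level, while its proof shows that $\pi^{*}\colon\Pic(X_i)\to\Pic(X_{i+1})$ is an isomorphism, whence $\rho(X_{i+1})=1$ as well.

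Applying this to $Y=X_\ell$: it has Frobenius $\Hs$-vanish up to rank $\dim Y$ in every level $m$, and since $\rho(Y)=1$ the hypothesis that $\omega_Y$ is ample (resp.\ nef) says exactly that $\omega_Y$ is $\Hs$-positive (resp.\ $\Hs$-nonnegative) for any ample polarization $\Hs$ on $Y$ (Remark \ref{Rmk:WCI}). Proposition \ref{strongstable} then yields that $F_Y^{m*}(\Omg_Y)$ is $\Hs$-stable (resp.\ $\Hs$-semistable) for every $m\in\N$, i.e.\ $\Omg_Y$ is strongly $\Hs$-stable (resp.\ strongly $\Hs$-semistable). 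Finally, since $\rho(Y)=1$, any two ample polarizations on $Y$ are numerically proportional by a positive scalar, so passing from one polarization to another multiplies every slope of a fixed torsion free sheaf by one common positive factor and hence preserves (semi)stability; therefore $\Omg_Y$ is strongly stable (resp.\ strongly semistable) with respect to an arbitrary polarization.

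The main obstacle is the bookkeeping in the inductive step: one must be sure that the rank up to which Frobenius $\Hs$-vanish is known never falls below the current dimension, and that Picard number $1$ is inherited at each stage — both being needed even to invoke Proposition \ref{CSV_for_divisor_2} and Theorem \ref{CSV_for_cover}. The numerical point works out precisely because a hyperplane section lowers dimension and rank each by one, while a cyclic cover changes neither; the Picard-number point rests on Grothendieck--Lefschetz (Corollary \ref{GroLefThm}) in case (a) and on the $\pi^{*}$-isomorphism of Picard groups in case (b). One should also check that every operation in the chain is performed on a variety of dimension $\geq 4$ — forced for hyperplane sections by the requirement $\dim X_{i+1}\geq 3$ — so that Corollary \ref{GroLefThm}, Proposition \ref{CSV_for_divisor_2} and Theorem \ref{CSV_for_cover} all apply verbatim.
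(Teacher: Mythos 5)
Your proposal is correct and follows essentially the same route as the paper: the paper's (much terser) proof likewise combines Corollary \ref{GroLefThm} and the Picard-group isomorphism from the proof of Theorem \ref{CSV_for_cover} to get $\rho(Y)=1$, then invokes Proposition \ref{CSV_for_divisor_2}, Theorem \ref{CSV_for_cover} and Proposition \ref{strongstable}. Your explicit induction along the chain $X_0,\ldots,X_\ell$, with the bookkeeping that the vanishing rank never drops below the current dimension, is just a careful spelling-out of what the paper leaves implicit.
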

\begin{proof}
By corollary \ref{GroLefThm} and proof of theorem \ref{CSV_for_cover}, we have $\rho(Y)=1$. Hence, any polarization are numerical equivalent up to a positive scalar. Combine proposition \ref{CSV_for_divisor_2}, theorem \ref{CSV_for_cover} and proposition \ref{strongstable} we get the strong stability of $\Omg_Y$ with respect to any polarization.
\end{proof}

\begin{Corollary}\label{Cor:WCI}
Let $k$ be an algebraically closed field of characteristic $p>0$, and $X$ a $n(\geq 4)$-dimensional smooth weighted complete intersection of a weak projective space. Let $Y$ be a smooth variety obtained from $X$ by taking hyperplane sections of dim $\geq 3$  and cyclic covers along smooth divisors. If the canonical bundle $\omega_Y$ is ample (resp. nef), then $\Omg_Y$ is strongly stable $($resp. strongly semistable$)$ with respect to any polarization.
\end{Corollary}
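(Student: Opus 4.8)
The plan is to deduce this corollary directly from Theorem \ref{Thm:StrongSTCover}, so the only thing that really needs checking is that a smooth weighted complete intersection $X$ of a weak projective space satisfies the two standing hypotheses of that theorem: that $\rho(X)=1$ and that $X$ has Frobenius $\Hs$-vanish of rank $n=\dim X$ in every level $m\in\N$.

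First I would record the Picard number. For $n\geq 3$ a smooth weighted complete intersection of a weak projective space has $\Pic(X)\cong\Z$, generated by $\Ox_X(1)$; this is the weighted analogue of the Grothendieck--Lefschetz theorem and is part of the structure theory underlying \cite{Noma01}. In particular $\rho(X)=1$, so $\Hs$-positivity (resp. $\Hs$-nonnegativity) coincides with ampleness (resp. nefness) and all polarizations are numerically proportional; this is exactly the setting in which the numerical hypotheses of Theorem \ref{Thm:StrongSTCover} make sense.

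Second, the Frobenius $\Hs$-vanish property is precisely the content of Remark \ref{Rmk:WCI}: by \cite[Proposition 2.1]{Noma01}, for any ample (equivalently $\Hs$-positive) line bundle $\Ls$ on $X$ and all nonnegative integers $i,j$ with $0\leq i+j<n$ one has $H^i(X,F_X^{m*}(\Omg^j_X)\otimes\Ls^{-1})=0$ for every $m\in\N$, i.e.\ $X$ has Frobenius $\Hs$-vanish of rank $n$ in any level. The point is that Noma's vanishing is obtained by resolving $\Omg^j_X$ through twists of the (weighted) Euler and conormal sequences and reducing to Bott-type vanishing on the ambient weak projective space, and this argument is insensitive to the Frobenius pullback because $F_X^{m*}$ only replaces the twisting line bundles by their $p^{nm}$-th powers, which stays in the ample range.

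Finally, with $\rho(X)=1$ and Frobenius $\Hs$-vanish of rank $n$ in any level established, $X$ meets every hypothesis of Theorem \ref{Thm:StrongSTCover}; applying that theorem to the variety $Y$ obtained from $X$ by iterated hyperplane sections of dimension $\geq 3$ and cyclic covers along smooth divisors gives that $\Omg_Y$ is strongly stable when $\omega_Y$ is ample and strongly semistable when $\omega_Y$ is nef, with respect to any polarization. The only delicate point — and the closest thing to an obstacle — is the bookkeeping of the rank bound: each hyperplane section lowers it by one (Proposition \ref{CSV_for_divisor_2}) while cyclic covers preserve it (Theorem \ref{CSV_for_cover}), and the preservation of $\rho=1$ via Corollary \ref{GroLefThm} requires the rank to remain $\geq 3$ throughout. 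This is guaranteed here because one starts at rank $n\geq 4$ and only takes hyperplane sections down to dimension $\geq 3$, and it is already incorporated into Theorem \ref{Thm:StrongSTCover}, so in the present corollary it suffices to invoke that theorem.
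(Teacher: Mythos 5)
Your proposal is correct and follows exactly the paper's route: the paper proves this corollary by citing Theorem \ref{Thm:StrongSTCover} together with Remark \ref{Rmk:WCI} (which invokes \cite[Proposition 2.1]{Noma01} for the Frobenius $\Hs$-vanish of weighted complete intersections). You merely make explicit the verification of $\rho(X)=1$ and the rank bookkeeping that the paper leaves implicit.
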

\begin{proof}
It is easily follows from theorem \ref{Thm:StrongSTCover} and Remark \ref{Rmk:WCI}.
\end{proof}

\end{document}